\theoremstyle{plain}
  \newtheorem{thm}{Theorem}[section]
  \newtheorem{lem}[thm]{Lemma}
  \newtheorem{prop}[thm]{Proposition}
\theoremstyle{definition}
  \newtheorem{ex}[thm]{Example}
\theoremstyle{remark}
  \newtheorem{rem}[thm]{Remark}
  \newtheorem*{ack}{Acknowledgments}
\newcommand{\Z}{\mathbb{Z}}
\newcommand{\cyclic}[1]{\Z/#1\Z}
\newcommand{\cyclicproduct}[2]{(\cyclic{#1})^{#2}}
\newcommand{\trivial}{\emptyset}
\newcommand{\vorbit}[1]{[\vec{#1}\,]}
\newcommand{\cvector}[1]{\vec{#1}}
\newcommand{\gvector}[1]{\overrightarrow{#1}}
\newcommand{\svector}[2]{\vec{#1}_{#2}}
\DeclareMathOperator{\complength}{w}
\numberwithin{equation}{section}
\begin{document}
\title{Homotopy invariants of Gauss phrases}
\author{Andrew Gibson}
\address{
Department of Mathematics,
Tokyo Institute of Technology,
Oh-okayama, Meguro, Tokyo 152-8551, Japan
}
\email{gibson@math.titech.ac.jp}
\date{\today}
\begin{abstract}
Equivalence relations can be defined on Gauss phrases using
 combinatorial moves.
In this paper we consider two closely related equivalence
 relations on Gauss phrases, homotopy and open homotopy.
In particular, in each case, we define a new invariant and determine the
 values that it can attain.
\end{abstract}
\keywords{Gauss phrases, nanophrases, homotopy invariant}
\subjclass[2000]{Primary 57M99; Secondary 68R15}
\thanks{This work was supported by a Scholarship from the Ministry of
Education, Culture, Sports, Science and Technology of Japan.} 
\maketitle
%%%%%%%%%%%%%%%%%%%%%%%%%%%%%%%%%%%%%%%%%%%%%%%%
\section{Introduction}
A Gauss word is a finite sequence of letters where each letter that
appears, appears exactly twice.
It is well known that any oriented virtual knot diagram has an
associated Gauss word (for example \cite{Kauffman:VirtualKnotTheory}).  
We can construct a Gauss word from a diagram in the following way. We
label the real crossings of the diagram and pick an arbitrary base point
on the curve away from any real crossings. Starting from the base point,
we follow the curve according to its orientation, noting the labels of
the real crossings we pass through. We stop when we get back to the base
point. As we passed through each real crossing twice, the resulting
sequence of letters is a Gauss word.
\par
A Gauss phrase is a tuple (ordered list) of sequences of letters where
concatenating all the sequences in the tuple together gives a Gauss
word.
By assigning an order to the components of a virtual link diagram,
we can construct a Gauss phrase from the diagram in a similar way to the
method used for a knot diagram.
In this case, we get a sequence of letters for each component of the
virtual link diagram.
By analogy, a single sequence of letters in the tuple is called a
component of the Gauss phrase. 
A Gauss phrase with just one component is necessarily a Gauss word.
\par
Note that when we construct a Gauss phrase from a virtual link diagram
we do not record the over and under crossing information from each real
crossing.
This means that even after flattening a virtual link diagram by
flattening each real crossing to a double point, we are still able to
construct the same Gauss phrase.
We call such diagrams \emph{flattened virtual link diagrams}.
\par
Two virtual link diagrams represent the same virtual link if they are
related by a finite sequence of diagrammatic moves known collectively as
generalized Reidemeister moves (these moves
are described, for example, in \cite{Kauffman:VirtualKnotTheory}).
For simplicity, in this paper we assume that the components of each
virtual link are assigned an order and this order is preserved under
the diagrammatic moves.
By flattening the generalized Reidemeister moves, we can
define an equivalence relation on flattened virtual link diagrams.
The equivalence classes of this relation are (multi-component) virtual
strings (also known as flattened virtual links).
Single-component virtual strings have been studied in various papers,
for example in \cite{Turaev:2004}, \cite{Gibson:tabulating-vs} and
\cite{Gibson:ccc}.
Each virtual link has an associated virtual string (derived by
flattening a diagram of the virtual link), which, by definition of the
moves on virtual strings, is an invariant of the virtual link.
\par
By studying how Gauss phrases change under the flattened generalized
Reidemeister moves, we can define combinatorial moves on Gauss phrases
and say that two Gauss phrases are equivalent if they are related by a
finite sequence of these moves. 
We call this relation \emph{homotopy}.
Then, by definition, the homotopy equivalence class of a Gauss phrase
derived from a flattened virtual link diagram is an invariant of the
virtual string the diagram represents.
We thus have a hierarchy of objects.
Each virtual link has an associated virtual string and each virtual
string has an associated Gauss phrase homotopy class.
\par
Suppose we are given two virtual strings.
If we can show that their associated Gauss phrase homotopy classes are
different, we can conclude that the two virtual strings are not
equivalent. 
Similarly, given two virtual knots, showing non-equivalence of the
associated Gauss phrase homotopy classes proves the two virtual knots to
be distinct. 
This gives some motivation for studying homotopy of Gauss phrases.
\par
On the other hand, as we lose information in the transition from virtual
links to Gauss phrases, the homotopy classes of a Gauss phrase is
clearly not a complete invariant of a virtual link.
Indeed, a Gauss phrase derived from any classical link with
$n$-components is homotopic to the $n$-component Gauss phrase where each
component is an empty word.
In fact, it is possible, for example using Turaev's $u$-polynomial
defined in \cite{Turaev:2004}, to show that the map from virtual strings
to homotopy classes of Gauss phrases is not a bijection. 
\par
One of the combinatorial moves we define on Gauss phrases is called the
\emph{shift move}.
It allows us to move the first letter of any component to the end of the
same component.
Diagrammatically, this corresponds to pushing a base point along a
curve, through a real crossing.
By disallowing this move, we can define a new kind of homotopy which we
call \emph{open homotopy}.
\par
Homotopy of Gauss phrases is in fact just a type of nanophrase homotopy.
Nanophrases were introduced by Turaev in \cite{Turaev:KnotsAndWords}.
A nanophrase is a Gauss phrase with a map from the alphabet of the Gauss
phrase to some fixed set $\alpha$. This map is called a projection. 
A nanoword is a nanophrase for which the Gauss phrase has just a single
component.
\par
Homotopy on nanophrases is also defined by moves on the associated Gauss
phrases. However, whether and how moves can be applied is dependent on
the projection and on $\alpha$.
In this sense, homotopy of Gauss phrases can be viewed as homotopy of
nanophrases with $\alpha$ just containing a single element.
One of the moves on nanophrases corresponds to the shift move for Gauss
phrases. 
If we disallow this move, we can define open homotopy for nanophrases.
\par
In \cite{Turaev:Words}, Turaev defined various open homotopy invariants of
nanowords (in that paper Turaev uses the term homotopy where we use open
homotopy).
All these invariants are trivial in the case where $\alpha$ contains a
single element. 
This fact led Turaev to conjecture (in \cite{Turaev:Words}) that
nanowords with $\alpha$ containing a single element are all
homotopically equivalent to the empty word.
In other words, the conjecture states that there is just a single
homotopy equivalence class of Gauss words. 
All the invariants that we consider in this paper are trivial for Gauss
words and so can not be used to provide a counter-example to Turaev's
conjecture.
\par
In \cite{Fukunaga:nanophrases} and \cite{Fukunaga:nanophrases2} Fukunaga
studies open homotopy of nanophrases.
In this paper we will see that some invariants he defines for open
homotopy of Gauss words are in fact homotopy invariants.
One such invariant can be defined as a symmetric $n \times n$ matrix
with elements in $\cyclic{2}$, where $n$ is the number of components in
the Gauss phrase.
In this paper we call this invariant the linking matrix.
Fukunaga also defines an open homotopy invariant of Gauss phrases called
$T$ which is not a homotopy invariant.
\par
In this paper we define a new open homotopy invariant of Gauss phrases
called $S_o$.
For an $n$-component Gauss phrase $p$, $S_o(p)$ is an $n$-tuple of
subsets of $\cyclicproduct{2}{n}$, one subset for each component.
Fukunaga's $T$ invariant can be calculated from $S_o$ but $S_o$ is
stronger than $T$.
On the other hand, $S_o$ and the linking matrix are independent.
\par
Using a similar construction we define a homotopy invariant called $S$.
For an $n$-component Gauss phrase, this invariant takes a value which is
an $n$-tuple of pairs, where each pair consists of a vector in
$\cyclicproduct{2}{n}$, taken from the linking matrix, and a subset of a
certain quotient of $\cyclicproduct{2}{n}$ by the vector.
In fact, the $S$ invariant can be calculated from the $S_o$ invariant
and the linking matrix.
However, as an open homotopy invariant, $S$ is weaker than a combination
of $S_o$ and the linking matrix.
\par
For both $S_o$ and $S$ we provide a canonical way of writing the
invariants as an $n$-tuple of matrices.
This gives a convenient way to record the invariants and makes checking
equivalence easier.
\par
We determine the values that the invariants $S_o$ and
$S$ can attain and in both cases we give a method to construct a Gauss
phrase with a given attainable value of the invariant.
In particular, we observe that for each $n$, the number of values that
$S_o$ or $S$ can attain over the entire set of $n$-component Gauss
phrases is finite. 
\par
We organise the rest of this paper as follows.
In Section~\ref{sec:gaussphrases} we give a formal definition of Gauss
phrases, open homotopy and homotopy.
In Section~\ref{sec:simple_invariants} we describe some homotopy
invariants of Gauss phrases which follow easily from the definition of
homotopy. 
\par
In Section~\ref{sec:so_invariant} we describe Fukunaga's $T$ invariant
and use it to show that homotopy and open homotopy are not
equivalent.
We then define the open homotopy invariant $S_o$ and show it to be
stronger than the $T$ invariant.
In Section~\ref{sec:so_conditions} we consider what values $S_o$ may
attain.
\par
We turn our attention back to homotopy in Section~\ref{sec:s_invariant}
where we define the $S$ invariant and prove its invariance. 
We study the invariant further in Section~\ref{sec:s_conditions} where
we consider what values $S$ may attain. 
\par
Finally, in Section~\ref{sec:other_homotopy}, we briefly discuss another
type of Gauss phrase homotopy where we allow permutations of
components.
We also suggest the possibility of defining stronger invariants based on
$S_o$ or $S$ for other kinds of nanophrase homotopy. 
\begin{ack}
The author would like to express his thanks to his supervisor Hitoshi
 Murakami for all his help, advice and encouragement.
He is also grateful to Vladimir Turaev for comments on an earlier
 version of this paper which led to several improvements, and for
 suggesting the proof given for Lemma~\ref{lem:odd_parity} which is much
 simpler than the author's original proof.
\end{ack}
%%%%%%%%%%%%%%%%%%%%%%%%%%%%%%%%%%%%%%%%%%%%%%%%
\section{Gauss phrases}\label{sec:gaussphrases}
An \emph{alphabet} is a finite set and elements of an alphabet are
called \emph{letters}. A \emph{word} on an alphabet $\mathcal{A}$ is a
finite sequence of letters of $\mathcal{A}$. The \emph{length} of a word
is the number of letters appearing in the sequence. 
Formally we can define a word as a map from the set $\{1, \dotsc, n\}$ to
$\mathcal{A}$ where $n$ is the length of the word.
However, we usually write the word as a sequence of unseparated letters
from which the map, if needed, can be deduced in the obvious way.
For example the words $A$, $BCB$ and
$CAABBCACBA$ are all words on the alphabet ${A,B,C}$. 
The empty sequence is a word on any alphabet. It has length $0$ and is
written $\trivial$.
\par
The concatenation of two words, $x$ and $y$, on
an alphabet $\mathcal{A}$ is defined to be the sequence of letters
constructed by joining the sequence of letters in $y$ to the end of the
sequence of letters in $x$. We write $xy$ for the concatenation of $x$
and $y$. For example, let $x$ be $A$ and $y$ be $BCB$, then $xy$ is
$ABCB$ and $yx$ is $BCBA$. It is clear that concatenation is
associative, that is $(xy)z$ is equal to $x(yz)$ for arbitrary words
$x$, $y$ and $z$. Thus we can unambiguously write $x_1x_2\dotsm x_n$ for
the concatenation of $n$ words $x_1, \dotsc, x_n$.
\par
A \emph{Gauss word} on an alphabet $\mathcal{A}$ is a word $w$ on
$\mathcal{A}$ such that every letter of $\mathcal{A}$ appears in $w$
exactly twice. For example, $ABCABC$ is a Gauss word on ${A,B,C}$. On
the other hand $ABAB$ is not a Gauss word on ${A,B,C}$ because $C$ does
not appear in the word. However $ABAB$ is a Gauss word on ${A,B}$.
A Gauss word on an alphabet $\mathcal{A}$ necessarily
has length $2n$ where $n$ is the number of letters in $\mathcal{A}$.
\par
A \emph{Gauss phrase} on an alphabet $\mathcal{A}$ is a sequence of
words $x_1, \dotsc, x_m$ on $\mathcal{A}$ such that $x_1x_2\dotsm x_m$
is a Gauss word on $\mathcal{A}$. We call the $x_i$ the components of
the Gauss phrase. Note that when a Gauss phrase has only one component,
that component must be a Gauss word.
\par
An isomorphism of Gauss phrases is defined as follows. Let
$p_1$ be a Gauss phrase on $\mathcal{A}_1$ and $p_2$ be a Gauss phrase on
$\mathcal{A}_2$.
If $f$ is a bijection mapping $\mathcal{A}_1$ onto
$\mathcal{A}_2$ such that $f$ applied letterwise to $p_1$ gives $p_2$,
then $f$ is an isomorphism of Gauss phrases. We say that
$p_1$ and $p_2$ are isomorphic if such an isomorphism
between them exists. 
\par
We now define some moves on Gauss phrases.
When describing moves on Gauss phrases we use the following
conventions. Upper case letters $A$, $B$ and $C$ represent arbitrary
individual
letters of a Gauss phrase. Lower case letters $w$, $x$, $y$, $z$ represent
sequences of letters, possibly including the $|$ character which
separates components of the Gauss phrase. Of course, in each move we
start and end with a Gauss phrase.
\par
The move H1 allows us to transform a Gauss phrase of the form $xAAy$
into the form $xy$. In other words, if the two
occurences of a letter appear adjacent in the same component, they can
be removed. The inverse of the move allows us to add some letter $A$ not
already appearing in the Gauss phrase. We can insert the subword $AA$
into any one component of the Gauss phrase. 
\par
The move H2 allows us to take a Gauss phrase of the form $xAByBAz$ and
transform it to the form $xyz$. Here, the subword $AB$ must
appear in a single component. The subword $BA$ must also appear in a
single component, possibly, but not necessarily, the same component as
the subword $AB$. The inverse of the move allows us to add some pair of
letters $A$ and $B$ not already appearing in the Gauss phrase.
\par
The move H3 allows us to transform a Gauss phrase of the form $wABxACyBCz$
into the form $wBAxCAyCBz$. Here each subword ($AB$, $AC$ and $BC$)
appears as a subword of a component in the Gauss phrase. The subwords
may all appear in the same component, or some may appear in different
components. The inverse move allows us to reverse the operation, that is
any Gauss phrase of the form $wBAxCAyCBz$ can be transformed to
$wABxACyBCz$.
\par
Any two Gauss phrases are said to be \emph{open homotopic} if there
exists a finite sequence of isomorphisms, and the moves H1, H2 and H3
and their inverses, which transform one Gauss phrase into the other.
It is easy to see that this relation is an equivalence relation.
We call this equivalence relation \emph{open homotopy} and collectively
the moves H1, H2, H3 and their inverses are called \emph{homotopy
moves}. 
\par
We also introduce a move called the \emph{shift move} which allows us to
take the first letter in any component of a Gauss phrase and move it to
the end of the same component.
Suppose we have a Gauss phrase $p$ where the $k$th component is $Ay$ for
some arbitrary sequence of letters $y$.
Applying the shift move to the $k$th component of $p$ gives a new Gauss
phrase $q$. 
The $k$th component of $q$ is $yA$ and for any $i$ not equal to $k$, the
$i$th component of $q$ is the same as the $i$th component of $p$.
\par
Any two Gauss phrases are said to be \emph{homotopic} if there exists a
finite sequence of isomorphisms, shift moves and the moves H1, H2 and H3
and their inverses, which transform one Gauss phrase into the other.
This relation is an equivalence relation which we call \emph{homotopy}.
\par
Note that homotopy is just open homotopy modulo the shift move.
Therefore, if two Gauss phrases are open homotopic, they are also
homotopic.
This means there is a well defined map from the set of equivalence
classes under open homotopy to the set of equivalence classes under
homotopy.
It also means that any homotopy invariant of Gauss phrases is also
invariant under open homotopy. 
On the other hand, the fact that two Gauss phrases are homotopic does not
necessarily imply that they are open homotopic.
An example to show this is given in Example~\ref{ex:closed-not-open}.
\par
We note that the homotopy moves and shift move were originally
introduced by Turaev in the more general case of nanowords
(\cite{Turaev:Words}) and nanophrases (\cite{Turaev:KnotsAndWords}).
Indeed, as mentioned in the introduction, homotopy of Gauss phrases is
just a type of homotopy of nanophrases.
%%%%%%%%%%%%%%%%%%%%%%%%%%%%%%%%%%%%%%%%%%%%%%%%
\section{Simple invariants}\label{sec:simple_invariants}
We start by noting some very simple homotopy invariants of Gauss phrases.
\par
As none of the moves allow us to add or remove components, the number of
components of a Gauss phrase is invariant.
\par
We define $\complength(i)$ to be the number of letters in the $i$th component of
the Gauss phrase modulo $2$. We have the following proposition. 
\par
\begin{prop}
For each $i$, $\complength(i)$ is a homotopy invariant.
\end{prop}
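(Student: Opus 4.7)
The plan is to verify directly that each generator of the homotopy equivalence relation preserves the parity of the length of every component. Since homotopy is generated by isomorphisms, shift moves, and the moves H1, H2, H3 together with their inverses, and since parity is preserved by composition, it suffices to check one move at a time.

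First I would dispatch the easy cases. An isomorphism of Gauss phrases is a letterwise relabelling and so leaves the length of every component unchanged. The shift move only cyclically permutes one component, again preserving all lengths. The move H3 replaces the three subwords $AB$, $AC$, $BC$ by $BA$, $CA$, $CB$; each swap happens inside a single component (by the convention that $AB$, $AC$, $BC$ each appear as subwords of a component), so no letters cross any $|$ separator and every component retains its length exactly.

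Next I would handle H1: removing an adjacent pair $AA$ decreases the length of the component containing it by $2$, and the inverse move increases it by $2$. In both cases the parity $\complength(i)$ of the affected component is unchanged, and no other component is altered.

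The main (but still routine) case is H2. Here I would split into two subcases depending on whether the subwords $AB$ and $BA$ lie in the same component. If they do, that single component loses (or gains) $4$ letters; if they lie in different components, each of the two affected components loses (or gains) $2$ letters. Either way every component's length changes by an even number, so $\complength(i)$ is preserved for every $i$. There is no genuine obstacle; the only care needed is remembering that $w$, $x$, $y$, $z$ in the move descriptions may themselves contain the separator $|$, so one must argue at the level of individual component lengths rather than the total length of the surrounding word.
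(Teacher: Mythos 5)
Your proposal is correct and follows essentially the same route as the paper's proof: a case-by-case check that isomorphism, the shift move and H3 preserve component lengths exactly, while H1 and H2 change the lengths of affected components only by even amounts. Your extra care in splitting H2 into the one-component and two-component subcases matches the paper's remark that letters are added or removed in pairs within each component.
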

\begin{proof}
As isomorphism, the shift move and move H3 does not change the number of
 letters in any component, it is clear that $\complength(i)$ is invariant under
 these operations. 
Move H1 only affects one component, either adding or removing two
 letters. However, modulo $2$, the number of letters is unchanged.
Move H2 affects either one or two components. 
In each case letters are added or removed in pairs, so again, modulo
 $2$, the number of letters in any component is unchanged.
Thus $\complength(i)$ is invariant under homotopy.
\end{proof}
\begin{rem}
In Lemma~4.5 of \cite{Fukunaga:nanophrases}, Fukunaga makes the same
 observation for open homotopy of nanophrases.
\end{rem}
\par
In particular we can define the \emph{component length vector}
$(\complength(1),\dotso,\complength(n))$ in $\cyclicproduct{2}{n}$.
This vector is a homotopy
invariant. Note that since the total number of 
letters appearing in a Gauss phrase is even, the sum of $\complength(i)$ over all
$i$ must be $0$ modulo $2$. This gives a necessary condition on a vector
in $\cyclicproduct{2}{n}$ to be component length vector for some
Gauss phrase. It is easy to see that this condition is also sufficient.
\par
\begin{ex}
The Gauss phrase $\trivial|\trivial$ has component length
 vector $(0,0)$ and the Gauss phrase $A|A$ has component length vector
 $(1,1)$. Thus these two Gauss phrases are distinct.
\end{ex}
\par
As each letter in a Gauss phrase appears exactly twice, we can consider
which component or components each letter appears in.
If both occurences of a letter appear in the same component we call the
letter a \emph{single-component letter}.
If the two occurences of a letter appear in different components we call
the letter a \emph{two-component letter}.
Note that the move H1 can only be applied to single-component
letters.
Also note that the two letters involved in an H2 move are either
both single-component letters or both two-component letters.
\par
We now define the \emph{linking matrix} $L$ of an $n$-component
Gauss phrase. 
We define $L$ to be an $n \times n$ matrix and we denote the elements of
$L$ by $l_{ij}$ where $i$ and $j$ are integers between $1$ and $n$
inclusive.
We set $l_{ii}$ to be $0$ for all $i$. When $i$ does not equal $j$ we
set $l_{ij}$ to be the number of two-component letters appearing both in
the $i$th component and the $j$th component, modulo $2$. 
By definition, $l_{ij}$ is equal to $l_{ji}$ for all $i$ and $j$.
This means that $L$ is a symmetric matrix with elements in
$\cyclic{2}$.
\begin{thm}
The linking matrix is a homotopy invariant of a Gauss phrase.
\end{thm}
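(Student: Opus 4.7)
The plan is to verify invariance under each of the generators of the homotopy equivalence relation: isomorphism, the shift move, and the moves H1, H2, H3 together with their inverses. The guiding observation is that $l_{ii}=0$ always, so one only needs to track off-diagonal entries, and each off-diagonal $l_{ij}$ is determined solely by the multiset of two-component letters having one occurrence in the $i$th component and the other in the $j$th. Hence for each move it suffices to examine how this multiset changes, modulo $2$.

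Isomorphism is immediate, since a letterwise bijection preserves components. The shift move only cyclically reorders the letters inside a single component, so no occurrence moves between components. The move H1 involves a single-component letter (as already noted in the excerpt), hence does not touch any two-component letter and therefore leaves every $l_{ij}$ with $i\neq j$ unchanged. For H3 on $wABxACyBCz$, each of the three transpositions $AB\leftrightarrow BA$, $AC\leftrightarrow CA$, $BC\leftrightarrow CB$ swaps two adjacent letters lying within a single component, so once again no occurrence of any letter changes component.

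The only case requiring a short argument is H2 applied to $xAByBAz$. Here I would invoke the observation from the excerpt that $A$ and $B$ are either both single-component letters or both two-component letters. In the single-component case nothing off-diagonal is affected. In the two-component case, the subword $AB$ lies in some component, say the $i$th, and the subword $BA$ lies in some component, say the $j$th, with $i\neq j$; since each letter appears exactly twice in total, both occurrences of $A$ and both occurrences of $B$ are accounted for, with one in the $i$th component and one in the $j$th. So the move removes exactly two two-component letters, both connecting the $i$th and $j$th components, changing $l_{ij}$ by $2\equiv 0\pmod 2$, with every other entry unaffected. The main obstacle, such as it is, lies in this bookkeeping for H2; everything else reduces to the trivial remark that occurrences do not migrate between components under the given moves.
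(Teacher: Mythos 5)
Your proposal is correct and follows essentially the same route as the paper: check each generating move, note that isomorphism, the shift move, H1 and H3 never change which components a letter's occurrences lie in, and for H2 observe that the only nontrivial case is two two-component letters joining the same pair of components, where $l_{ij}$ changes by $2\equiv 0\pmod 2$. Your bookkeeping for H2 (that the $AB$ and $BA$ subwords must lie in distinct components $i\neq j$, each of $A$, $B$ contributing one occurrence to each) matches the paper's argument.
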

\begin{proof}
The shift move, isomorphism and move H3 do not add or remove letters
 from a Gauss phrase. They also do not change single-component letters
 into two-component letters or vice-versa. Thus the linking matrix is
 invariant under these operations.
\par
We have already noted that the move H1 can only be applied to
 single-component letters. As the status of other letters in the
 Gauss phrase are unaffected, the linking matrix is invariant under this
 move too.
\par
If the move H2 is applied to single-component letters, as for the H1
 move, the linking matrix is invariant.
Suppose the move H2 is applied to two two-component letters. The two letters must
 both appear in in the same pair of components, say component $i$ and
 component $j$ (where $i$ does not equal $j$). Then the number
 of two-component letters appearing both in the $i$th component and the
 $j$th component changes by $2$ under the H2 move. However, since we
 consider $l_{ij}$ modulo $2$, $l_{ij}$ does not change and so the
 linking matrix is invariant under this move. 
\end{proof}
\begin{rem}
After writing this section we discovered that in
 \cite{Fukunaga:nanophrases2}, Fukunaga has defined an equivalent
 invariant in the case of open homotopy.
In fact, in that paper Fukunaga defines the invariant for the more
 general case of any open homotopy of nanophrases.
Invariance is stated in Proposition~5.10 of
 \cite{Fukunaga:nanophrases2}. 
\end{rem}
We note that the component length vector of a Gauss phrase can be
calculated from the linking matrix. The $i$th element in the vector is
given by 
\begin{equation*}
\complength(i) = \sum_{j=1}^n l_{ij} \mod 2.
\end{equation*}
We now give an example to show that the linking matrix is a stronger
invariant than the component length vector.
\begin{ex}
Let $\alpha$ be the Gauss phrase $\trivial|\trivial|\trivial$ and $\beta$
 be the Gauss phrase $AB|AC|BC$. Then both $\alpha$ and $\beta$ have the 
 component length vector $(0,0,0)$. On the other hand, the linking
 matrix for $\alpha$ is
\begin{equation*}
\begin{pmatrix}
0 & 0 & 0 \\
0 & 0 & 0 \\
0 & 0 & 0
\end{pmatrix},
\end{equation*}
but the linking matrix for $\beta$ is
\begin{equation*}
\begin{pmatrix}
0 & 1 & 1 \\
1 & 0 & 1 \\
1 & 1 & 0
\end{pmatrix}.
\end{equation*}
Thus $\alpha$ and $\beta$ are distinct Gauss phrases under homotopy.
\end{ex}
Let $L$ be an $n \times n$ symmetric matrix with elements in
$\cyclic{2}$ such that the entries on the main diagonal are all zero.
Then we can easily construct a Gauss phrase that has linking matrix
$L$. 
\par
We use $l_{ij}$ to denote the elements of $L$.
We start with the $n$-component Gauss phrase that has all $n$
components empty.  Consider each pair $(i,j)$ where 
$1 \leq i < j \leq n$. If $l_{ij}$ is $1$ we add a letter $A_{ij}$ to
the $i$th and $j$th components. If $l_{ij}$ is $0$ we do nothing.
After we have considered all such pairs $(i,j)$, the result is a
Gauss phrase which has $L$ as its linking matrix.
\par
\begin{ex}
Let $L$ be the matrix
\begin{equation*}
\begin{pmatrix}
0 & 0 & 1 & 1 \\
0 & 0 & 1 & 0 \\
1 & 1 & 0 & 1 \\
1 & 0 & 1 & 0
\end{pmatrix}.
\end{equation*}
Then the Gauss phrase
$A_{13}A_{14}|A_{23}|A_{13}A_{23}A_{34}|A_{14}A_{34}$ has linking
matrix $L$.
\end{ex}
%%%%%%%%%%%%%%%%%%%%%%%%%%%%%%%%%%%%%%%%%%%%%%%%
\section{The invariant $S_o$}\label{sec:so_invariant}
In this section we focus on open homotopy of Gauss phrases.
We start by recalling the $T$ invariant defined by Fukunaga.
We use it to show that open homotopy is not equivalent to homotopy.
\par
The $T$ invariant is an open homotopy invariant of Gauss phrases.
The original definition was given in Definition~3.10 of
\cite{Fukunaga:nanophrases}.
We give an equivalent definition of the invariant here.
Let $p$ be an $n$-component Gauss phrase.
We say that a single-component letter $A$ has odd parity if the number
of letters appearing between the two occurences of $A$ is odd. We then
define $T_k(p)$ to be the number of odd parity single-component letters
appearing in the $k$th component, modulo $2$. Thus $T_k(p)$ is in
$\cyclic{2}$.
We then define $T(p)$ to be the vector in $\cyclicproduct{2}{n}$ where the
$k$th element is $T_k(p)$ for each $k$.
It is easy to check that $T(p)$ is an open homotopy invariant and that
it is equivalent to Fukunaga's $T$ invariant.
\begin{ex}\label{ex:closed-not-open}
Let $p$ be the Gauss phrase $A|A$. Then $T(p)$ is the vector $(0,0)$.
\par
Let $q$ be the Gauss phrase $ABA|B$. Then $T(q)$ is the vector
 $(1,0)$. This means that $q$ is not open homotopic to $p$.
\par
Under homotopy we can apply a shift move to $q$ to get the Gauss
 phrase $BAA|B$. Applying an H1 move to this Gauss phrase gives us $B|B$
 which is isomorphic to the Gauss phrase $p$. 
Thus $q$ is homotopic to $p$.
\end{ex}
This example shows that homotopy and open homotopy are not equivalent.
It also shows that Fukunaga's $T$ invariant is not invariant under
homotopy.
\par
In this section we define a new invariant $S_o$ for open homotopy of
Gauss phrases. 
We now introduce some new terminology.
\par
We write $K_n$ for $\cyclicproduct{2}{n}$ and consider elements of $K_n$
as vectors.
\par
Let $p$ be an $n$-component Gauss phrase. Let $w$ be a subword of a
component of $p$. We define the \emph{linking vector} of $w$ to be a
vector $\cvector{v}$ in $K_n$. The $i$th element of
$\cvector{v}$ is the number of letters, modulo $2$, which occur just once
in $w$ and for which the other occurence of the letter appears in the
$i$th component of $p$. 
Note that the order of the letters in $w$ is not important, only which
letters appear.
We remark that if $w$ is the whole of $k$th component
of $p$ then the linking vector of $w$ is the $k$th row (or column) of
the linking matrix for $p$.
We write the linking vector of $w$ as $\gvector{l(w)}$.
\par
We define the \emph{linking vector} of a single-component letter $A$ as
follows. Suppose $A$ is in the $k$th component of $p$. Then that
component has the form $xAyAz$ for some, possibly empty, words $x$, $y$
and $z$. We define the linking vector of $A$ to be the linking vector of
the word $y$. We write it $\gvector{l(A)}$.
\begin{ex}
Consider the Gauss phrase $ABAC|DBEDFEG|CFG$. Then the linking vector of $A$
 is $(0,1,0)$, the linking vector of $D$ is $(1,1,0)$ and the linking
 vector of $E$ is $(0,1,1)$.
\end{ex}
Write $\mathcal{A}_k$ for the set of single-component letters appearing
in the $k$th component of a Gauss phrase $p$.
For a vector $\cvector{v}$ in $K_n$ we define a map $d_k$ from $K_n$ to
$\Z$ by 
\begin{equation*}
 d_k(\cvector{v}) = \sharp \lbrace X\in \mathcal{A}_k | \gvector{l(X)} =
  \cvector{v} \rbrace 
\end{equation*}
where $\sharp$ means the number of elements in the set.
Let $B_k(p)$ be the subset of $K_n - \{\cvector{0}\}$ given by
\begin{equation*}
 B_k(p) = \lbrace \cvector{v} \in K_n - \{\cvector{0}\} |
  d_k(\cvector{v}) \text{ is odd}\rbrace.
\end{equation*} 
In other words $B_k(p)$ is the set of non-zero vectors of $K_n$ which are
the linking vectors of an odd number of single-component letters in the
$k$th component of $p$.
\par 
We define $S_o(p)$ to be the $n$-tuple where for each $k$ the $k$th element
is the set $B_k(p)$.
\begin{thm}\label{thm:so_invariant}
Let $p$ be a Gauss phrase. Then $S_o(p)$ is an open homotopy invariant
 of $p$.
\end{thm}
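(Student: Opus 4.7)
The plan is to verify directly that $S_o(p)$ is preserved by each operation generating open homotopy: isomorphisms, and the moves H1, H2, H3 together with their inverses. Invariance under isomorphism is immediate, since linking vectors depend only on the combinatorial structure of the phrase, not on the letter names.

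For H1, which removes a subword $AA$ from some component $k$: the letter $A$ is single-component with $\gvector{l(A)}=\vec{0}$, so it contributes only to $d_k(\vec{0})$ and hence not to any $B_k(p)$. For any other single-component letter $X$, since the two occurrences of $A$ are adjacent, they lie either both inside or both outside the subword between the two occurrences of $X$; in either case $\gvector{l(X)}$ is unchanged.

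For H2, replacing $xAByBAz$ by $xyz$, I split into the cases where $A$ and $B$ are both single-component (forcing them into one component $k$) and where they are both two-component. In the first case a short calculation gives $\gvector{l(A)}=\gvector{l(ByB)}=\gvector{l(y)}=\gvector{l(B)}$, so $A$ and $B$ contribute the same vector to $d_k$ and their contributions cancel mod~$2$; in the second case, neither letter belongs to any $\mathcal{A}_k$. In both cases, for any other single-component letter $X$, the subword between its two occurrences contains $0$, $2$, or $4$ of the relevant occurrences of $A$ and $B$, and in each sub-case the contributions to $\gvector{l(X)}$ cancel in pairs modulo~$2$.

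The main work is H3, replacing $wABxACyBCz$ by $wBAxCAyCBz$. Since no letter changes its component under H3, each $\mathcal{A}_k$ is preserved and it suffices to control linking vectors. For any single-component $X\notin\{A,B,C\}$, the move consists of three disjoint adjacent transpositions, none of which involves $X$; hence the multiset of letters between the two occurrences of $X$ is unchanged, and so is $\gvector{l(X)}$. For $A,B,C$ themselves, the key preliminary observation is that if any two of them are single-component then all three are and they all lie in a common component---since, for example, $A$ being single-component forces the components of the $AB$ and $AC$ subwords to coincide, and similarly for the other pairs. This leaves only two subcases: exactly one of $A,B,C$ is single-component, and all three are. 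In each subcase a direct computation of the subwords between each pair of matching occurrences before and after H3 shows that every $\Delta\gvector{l(X)}$ (for $X$ a single-component letter among $\{A,B,C\}$) is a sum of two standard basis vectors at positions which coincide in that subcase, and so vanishes mod~$2$. Combining everything, every $B_k(p)$ is preserved. The main obstacle is this H3 case analysis, but it becomes routine once the component-coincidence observation is in hand.
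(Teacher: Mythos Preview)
Your proposal is correct and follows essentially the same approach as the paper's proof: a direct case-by-case verification that each $B_k(p)$ is preserved under isomorphism and the moves H1, H2, H3. The only notable organizational difference is in the H3 analysis: the paper treats each of $A$, $B$, $C$ in turn and uses the observation that, for a single-component letter among them, the other two are always ``counted equivalently'' (either both single-component, or both two-component with their other occurrences in the same component), whereas you instead use the equivalent observation that two of $A,B,C$ being single-component forces all three into one component, and then split into the ``exactly one'' and ``all three'' subcases. Both routes lead to the same elementary cancellations.
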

\begin{proof}
It is enough to prove that $B_k(p)$ is an open homotopy invariant of
 $p$.
We must show that $B_k(p)$ is unchanged for each homotopy move.
\par
An isomorphism of a Gauss phrase just changes the labels of the
 letters. This has no effect on the linking vectors of any
 single-component letters and thus has no effect on the set $B_k(p)$. This
 means that $B_k(p)$ is invariant under isomorphism. 
\par
We now note that any move which does not involve the $k$th component of
 $p$ can not affect $B_k(p)$. Thus we only need to consider moves involving
 the $k$th component.
\par
The move H1 takes a Gauss phrase $p$ with $k$th component $xAAy$ and
 gives a Gauss phrase $q$ with $k$th component $xy$. Consider any other
 single-component letter $B$ in the $k$th component. The letter $A$ only
 has an effect on the linking vector for $B$ if there is just one
 occurence of $A$ appearing between the two occurences of $B$ in the
 component. However, since the two occurences of $A$ must
 appear adjacently, this is not possible. Thus the presence or absence
 of the two occurences of $A$ in the component has no effect on the
 linking vector of $B$.
\par
The linking vector of $A$ itself is $\cvector{0}$. 
Since $B_k(p)$ is defined to be a subset of $K_n - \{\cvector{0}\}$,
 adding or removing $A$ from the Gauss phrase has no effect
 on $B_k(p)$. Thus $B_k(p)$ is invariant under the move H1 and its inverse.
 \par
The move H2 has two forms depending on whether the letters removed are
 single-component letters or two-component letters. 
In either case let $A$ and $B$ be the letters that are removed. Now
 consider any other 
 single-component letter $C$ in the $k$th component. 
When we calculate the linking vector for $C$, one or both of the
 subwords $AB$ and $BA$ may appear between the two occurences of
 $C$. However, this means that when we calculate the vector, we have to
 count $A$ if and only if we have to count $B$.
The other occurences of $A$ and $B$ appear in the same component and so
 $A$ and $B$ are counted equivalently. 
Since we count modulo two, their contributions cancel and
 so the linking vector for $C$ is the same whether $A$ and $B$ are both
 present or both removed.
\par
If $A$ and $B$ are two-component letters then neither $A$ nor $B$ are in
 $\mathcal{A}_k$. 
Therefore the removal or insertion of $A$ and $B$ has no effect on
 $B_k(p)$. 
\par
If $A$ and $B$ are single-component letters then the move takes a Gauss
 phrase $p$ with $k$th component $xAByBAz$ and gives a Gauss phrase $q$
 with $k$th component $xyz$. 
In $p$, the linking vectors for $A$ and $B$ are both given by
 $\gvector{l(y)}$. 
Thus, when we apply the H2 move or its inverse, $d_k(\gvector{l(y)})$
 changes by plus or minus two.
This means that $\gvector{l(y)}$ is in $B_k(q)$, if and only
 if $\gvector{l(y)}$ is in $B_k(p)$.
In other words, $B_k(p)$ is invariant under move H2 or its inverse.
\par
We now consider move H3 which takes a Gauss phrase of the form
 $wABxACyBCz$ and gives us one of the form $wBAxCAyCBz$. For any
 single-component letter $D$ uninvolved in the move, the linking vector
 for $D$ is unchanged by the move. This is because the linking vector is
 unaffected by changing the order of any letters between the two
 occurences of $D$.
\par
For a single-component letter $X$ involved in the H3 move, we first note
 that the other two letters involved in the move must both be of the
 same type for the purposes of calculating the linking vector of
 $X$.
More precisely, there are two possible cases.
The first case is that both the other letters are single-component
 letters.
The second case is that both the other letters are two-component
 letters.
In this case, the other occurences of the two letters must appear in the
 same component.
\par
Assume $A$ is a single-component letter. Then the difference between the
 linking vectors for $A$ before and after the moves is that $B$ is 
 counted in the linking vector before the move whereas $C$ is counted in
 the linking vector after the move. However, as $B$ and $C$ are counted
 equivalently it means the linking vector for $A$ is unchanged.
\par
If $B$ is a single-component letter then $A$ and $C$ are necessarily
 single-component letters too. We consider the difference between the
 linking vectors for $B$ before and after the move.
 Before the move, $A$ and $C$ are both counted. As they are counted
 equivalently and we count modulo two, their contributions cancel. After the
 move, neither $A$ nor $C$ are counted. Thus the linking vector for $B$
 is unchanged by the move.
\par
The case where $C$ is a single-component letter is symmetrical to the
 case for $A$. Here $B$ is counted in the linking vector before the move
 and $A$ is counted in the linking vector after the move, meaning that
 overall the linking vector is unchanged. Thus the linking vector for
 $C$ is unchanged by the move.
\par
Thus we can conclude that $B_k(p)$ is unchanged by the move H3 or its
 inverse which completes the proof.
\end{proof}
To make it easy to compare the $S_o$ invariant for two different Gauss
phrases, we provide a canonical way of writing each $B_k(p)$ as a
matrix. We can then write $S_o(p)$ as a tuple of matrices. 
Given two Gauss phrases $p$ and $q$, determining whether $S_o(p)$ is
equivalent to $S_o(q)$ 
or not becomes a simple matter of determining whether the two tuples of
matrices are equal or not.
\par
We start by defining an order on the vectors of $K_n$ in the following
way.
Let $\cvector{u}$ and $\cvector{v}$ be two vectors in $K_n$. 
Let $u_i$ be the $i$th element in $\cvector{u}$ and $v_i$ be the $i$th
element in $\cvector{v}$. 
Then $\cvector{u}$ is less than $\cvector{v}$ if there exists an $i$
such that $u_i$ is $0$, $v_i$ is $1$ and for all $j$ less than $i$,
$u_j$ is equal to $v_j$.
\par
Suppose $p$ is an $n$-component Gauss phrase and $B_k(p)$ has $r$
elements.
We define a matrix $M_k$ for $B_k(p)$ as follows.
If $r$ is $0$, $M_k$ is the zero matrix with $n$ columns and $1$ row.
Otherwise, $M_k$ is the matrix with $n$ columns and $r$ rows where the
rows of $M_k$ are the elements of $B_k(p)$ written out in ascending
order.
\par
It is clear that given such a matrix we can reconstruct the set
$B_k(p)$.
On the other hand, there is no room for choice in the construction of
the matrix and so, under this construction method, there is only one
matrix which represents $B_k(p)$.
Thus for two Gauss phrases $p$ and $q$, $B_k(p)$ is equivalent to
$B_k(q)$ if and only if their corresponding matrices are 
equal.
\par
Note that we can calculate Fukunaga's $T$ invariant from $S_o$.
Recall that for a Gauss phrase $p$, $T_k(p)$ is, modulo two, the number
of odd parity single-component letters appearing in the $k$th component.
We say that a vector in $K_n$ is \emph{odd} if the sum of its elements is
$1$ modulo two.
It is simple to verify that $T_k(p)$ is equal, modulo two, to the
number of odd vectors in $B_k(p)$.
\par
The following example shows that the invariant $S_o$ is stronger than
the invariant $T$.
\begin{ex}
Consider the Gauss phrase $p$ given by $ACBADBEF|CE|DF$.
Then $T(p)$ is $(0,0,0)$ which means $T$ can not distinguish $p$ from
 the trivial $3$-component Gauss phrase
$\trivial | \trivial | \trivial$. 
\par
On the other hand $S_o(p)$ is given by the $3$-tuple of matrices
\begin{equation*}
\begin{pmatrix}
\begin{pmatrix}
1 & 0 & 1 \\
1 & 1 & 0 \\
\end{pmatrix}, 
\begin{pmatrix}
0 & 0 & 0 \\
\end{pmatrix}, 
\begin{pmatrix}
0 & 0 & 0 \\
\end{pmatrix}
\end{pmatrix}.
\end{equation*}
This shows that $p$ is not equivalent to the trivial $3$-component Gauss
 phrase under open homotopy.
\end{ex}
%%%%%%%%%%%%%%%%%%%%%%%%%%%%%%%%%%%%%%%%%%%%%%%%
\section{Properties and realizability of $S_o$}\label{sec:so_conditions}
In this section we determine what possible values $S_o$ can attain. We
start by asking the following question.
Suppose $B$ is a subset of $K_n - \{\cvector{0}\}$.
What properties must $B$ satisfy for $B$ to be $B_k(p)$ 
of the $k$th component of some Gauss phrase $p$?
\par
In order to answer this question we must first make some definitions.
We say that a vector in $K_n$ is \emph{$k$-odd} if the $k$th element of
the vector is $1$ and say that it is \emph{$k$-even} if the $k$th
element is $0$. 
A necessary condition on $B$ is given in the following proposition.
\begin{prop}\label{prop:so_necessary}
Let $p$ be a Gauss phrase.
Then $B_k(p)$ contains an even number of $k$-odd vectors.
\end{prop}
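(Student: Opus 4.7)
The plan is to reduce the statement to a symmetric double-counting argument over pairs of single-component letters in the $k$th component.

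First I would unpack the definitions. The parity of $|B_k(p) \cap \{k\text{-odd vectors}\}|$ equals, modulo $2$,
\[
\sum_{\vec{v} \text{ is } k\text{-odd}} d_k(\vec{v}) \pmod{2},
\]
because $d_k(\vec{v}) \equiv 1$ precisely when $\vec{v} \in B_k(p)$, and all other terms contribute $0$ mod $2$. By definition of $d_k$, this sum equals the number of single-component letters $X$ in the $k$th component whose linking vector $\gvector{l(X)}$ is $k$-odd. So the claim reduces to showing that this count is even.

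Next I would analyze the $k$th entry of $\gvector{l(X)}$ for a single-component letter $X$ in the $k$th component. Writing that component as $xAyAz$ with $A=X$, the $k$th entry counts, modulo $2$, the letters occurring exactly once in $y$ whose other occurrence is also in the $k$th component. A two-component letter having one occurrence in the $k$th component has its other occurrence in a different component, so it contributes nothing to the $k$th entry. Thus the only contributors are other single-component letters $Y \in \mathcal{A}_k \setminus \{X\}$ that have exactly one occurrence inside $y$. Defining $\link{X}{Y} = 1$ if the two occurrences of $X$ and the two occurrences of $Y$ interleave in the $k$th component and $0$ otherwise, we obtain
\[
(\gvector{l(X)})_k \equiv \sum_{Y \in \mathcal{A}_k \setminus \{X\}} \link{X}{Y} \pmod 2.
\]

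Finally I would sum over $X \in \mathcal{A}_k$:
\[
\#\{X \in \mathcal{A}_k : \gvector{l(X)} \text{ is } k\text{-odd}\} \equiv \sum_{X \in \mathcal{A}_k}\sum_{Y \in \mathcal{A}_k \setminus \{X\}} \link{X}{Y} \pmod 2.
\]
Since $\link{X}{Y} = \link{Y}{X}$ by definition, each unordered pair $\{X,Y\}$ contributes $2\link{X}{Y}$ to the right-hand side, which is $0$ mod $2$. Hence the left-hand side is even, completing the proof.

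The only mildly delicate point is confirming that two-component letters with one endpoint in the $k$th component genuinely do not contribute to the $k$th entry of $\gvector{l(X)}$, so that the count really reduces to a symmetric sum over pairs of single-component letters in the $k$th component; once that is clear, the symmetry argument finishes the proof immediately.
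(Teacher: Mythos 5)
Your proof is correct and follows essentially the same route as the paper: reduce the parity of the $k$-odd part of $B_k(p)$ to counting single-component letters with $k$-odd linking vector, note that two-component letters cannot affect the $k$th entry so only interleaving single-component letters matter, and finish by symmetric double counting over the linked-pair function. The only difference is presentational: the paper factors the final step out as Lemma~\ref{lem:odd_parity} about odd-parity letters in a Gauss word (via the auxiliary word $w_k$), whereas you inline exactly that symmetry argument.
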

\begin{proof}
Let $D(k)$ be the set of single-component letters in the $k$th component
 of $p$ for which the linking vector is $k$-odd.
Then the number of $k$-odd vectors in $B_k(p)$ is equal, modulo two, to
 the number of letters in $D(k)$.
Thus we just need to show that the number of letters in $D(k)$ must be
 even.
\par
Note that if we take just the $k$th component of $p$
 and delete all the two-component letters from it, we still have enough
 information to calculate whether the linking vector for each
 single-component letter is $k$-odd or not.
As the resulting word just contains the single-component letters, it is
 a Gauss word. We call this Gauss word $w_k$.
For any letter $A$ in a Gauss word, the Gauss word has the form
 $xAyAz$. 
We say that $A$ has odd parity if $y$ is an odd length word and
 that $A$ has even parity if $y$ is an even length word.
Then a letter is in $D(k)$ if and only if it has odd parity in $w_k$.
The proof of the proposition is then complete if we can prove that the
 number of odd parity letters in $w_k$ is even. This fact is proven in
 Lemma~\ref{lem:odd_parity}.
\end{proof}
\begin{lem}\label{lem:odd_parity}
The number of odd parity letters in any Gauss word is even.
\end{lem}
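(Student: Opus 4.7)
The plan is to translate the odd parity condition into a statement about positions within the Gauss word and then count by parity. Let $w$ be a Gauss word of length $2n$ on an alphabet with $n$ letters, and for each letter $A$ let $i_A < j_A$ denote the two positions in $\{1, \ldots, 2n\}$ at which $A$ occurs. Writing $w = xAyAz$, we have $\abs{y} = j_A - i_A - 1$, so $A$ has odd parity precisely when $j_A - i_A$ is even, equivalently when $i_A$ and $j_A$ have the same parity.

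Given this reformulation, I would assign to each letter $A$ the quantity $\sigma(A) \in \cyclic{2}$ defined as the residue of $i_A + j_A$ modulo $2$; then $A$ has odd parity if and only if $\sigma(A) = 0$. Since every element of $\{1, \ldots, 2n\}$ occurs as either $i_A$ or $j_A$ for exactly one letter $A$, the sum $\sum_A (i_A + j_A)$ collects each position exactly once, giving $\sum_A \sigma(A) \equiv \sum_{k=1}^{2n} k = n(2n+1) \equiv n \pmod 2$. Letting $o$ and $e$ denote the numbers of odd and even parity letters, we have $o + e = n$ on the one hand and $e = \sum_A \sigma(A) \equiv n \pmod 2$ on the other, so $o = n - e \equiv 0 \pmod 2$, as required.

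I do not anticipate a real obstacle here; the only nontrivial step is spotting the position-parity reformulation, after which the conclusion is a one-line counting identity. An alternative route by induction, for instance by removing an adjacent pair $AA$ when one exists or otherwise splicing out some $A\ldots A$, would be feasible but more cumbersome, since such surgeries shift the positions of many other letters and flip several parities simultaneously, requiring careful bookkeeping. The global counting approach sidesteps this entirely, which is presumably why the Turaev-suggested argument mentioned in the acknowledgments is much simpler than the author's original one.
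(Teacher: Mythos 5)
Your proof is correct, and it takes a genuinely different route from the paper. The paper's argument (attributed to Turaev) is an interlacement/handshake argument: it defines $\link{A}{B}$ to be $1$ when the two letters alternate in the word, observes that $A$ has odd parity if and only if it is linked with an odd number of letters, and then notes that $\sum_A\sum_B \link{A}{B}$ is even by the symmetry $\link{A}{B}=\link{B}{A}$, so the number of odd letters is even. You instead work with positions: writing $i_A<j_A$ for the two occurrences of $A$, you note that $A$ has odd parity exactly when $i_A+j_A$ is even, and then sum $i_A+j_A$ over all letters to get $1+2+\dotsb+2n=n(2n+1)\equiv n \pmod 2$, which forces the number of even parity letters to be congruent to $n$ and hence the number of odd parity letters to be even. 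Both are short global parity counts; the paper's version is intrinsic to the combinatorics of interlacement (the same relation underlying the linking vectors used elsewhere in the paper), while yours leans on the specific position set $\{1,\dotsc,2n\}$, which makes the final computation a one-line arithmetic identity. One cosmetic point: since $\sigma(A)$ lives in $\cyclic{2}$, you should say $e\equiv\sum_A\sigma(A)\pmod 2$ (or declare $\sigma(A)\in\{0,1\}\subset\Z$) rather than asserting equality of $e$ with a sum of residues; this does not affect the argument.
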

\begin{proof}
Let $w$ be a Gauss word.
We say that two letters $A$ and $B$ in $w$ are \emph{linked} if $w$ has
 the form $uAvBxAyBz$ or $uBvAxByAz$. 
In other words, $A$ and $B$ are linked if they appear alternating in
 $w$.
Define $l(A,B)$ to be $1$, if $A$ and $B$ are different letters which
 are linked, and $0$ otherwise.
Note that because being linked is a symmetrical relationship, $l(A,B)$
 is equal to $l(B,A)$ for all letters $A$ and $B$ in $w$.
\par
It is easy to check that $A$ is odd if and only if the number of letters
 it is linked with is odd.
That is $A$ is odd if and only if
\begin{equation*}
\sum_{B\in w}l(A,B)
\end{equation*} 
is odd.
\par
Thus the number of odd letters in $w$ must have the same parity as
\begin{equation*}
\sum_{A\in w}\sum_{B\in w}l(A,B).
\end{equation*} 
By the symmetry of $l$, it is clear that this sum is even.
Thus the number of odd letters in $w$ must be even.
\end{proof}
We next show that this condition on a set $B$ is sufficient. We
have the following proposition.
\begin{prop}\label{prop:bk_construction}
Let $k$ and $n$ be positive integers such that $k$ is less than or equal
 to $n$.
Suppose that $B$ is a subset of $K_n - \{\cvector{0}\}$ which contains
 an even number of $k$-odd vectors.
Then there exists an $n$-component Gauss phrase $p$ for which $B_k(p)$
 is $B$ and every component other than the $k$th component contains no
 single-component letters.
\end{prop}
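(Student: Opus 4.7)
The plan is to build $p$ by concatenating, in the $k$th component, a disjoint chunk for each $k$-even element of $B$ and a single chunk for each pair of $k$-odd elements of $B$; the remaining components will hold only the second occurrences of the two-component letters introduced in the chunks. Since by hypothesis the number of $k$-odd vectors in $B$ is even, I may group them arbitrarily into pairs $(\cvector{v}_1,\cvector{v}_1'), \dotsc, (\cvector{v}_m,\cvector{v}_m')$, leaving the $k$-even vectors $\cvector{u}_1, \dotsc, \cvector{u}_s$ unpaired.

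For each $k$-even $\cvector{u} \in B$ I introduce a fresh single-component letter $A_{\cvector{u}}$ together with fresh two-component letters $B_{\cvector{u},i}$, one for every $i \neq k$ with $u_i = 1$. The chunk $A_{\cvector{u}} B_{\cvector{u},i_1} \dotsm B_{\cvector{u},i_s} A_{\cvector{u}}$ goes into the $k$th component, and each $B_{\cvector{u},i}$ also has its second copy placed in component $i$. Reading off the definition of the linking vector gives $\gvector{l(A_{\cvector{u}})} = \cvector{u}$ directly, and no other single-component letter lies between the two copies of $A_{\cvector{u}}$, so the $k$th coordinate comes out to $0 = u_k$ as required.

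The main obstacle is realizing a $k$-odd linking vector, since a straightforward sandwich $A \dotsm A$ using only two-component letters inside forces the $k$th coordinate of the resulting linking vector to be zero. I resolve this by an interleaving trick. For each pair $(\cvector{v},\cvector{v}')$ of $k$-odd vectors, I use the chunk
\begin{equation*}
A_{\cvector{v}}\, W_{\cvector{v}}\, A_{\cvector{v}'}\, A_{\cvector{v}}\, W_{\cvector{v}'}\, A_{\cvector{v}'},
\end{equation*}
where $W_{\cvector{v}}$ and $W_{\cvector{v}'}$ are built from fresh two-component letters realizing the non-$k$ coordinates of $\cvector{v}$ and $\cvector{v}'$ respectively, exactly as in the $k$-even case. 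Between the two copies of $A_{\cvector{v}}$ there appear the letters of $W_{\cvector{v}}$ together with a single occurrence of $A_{\cvector{v}'}$, so $A_{\cvector{v}'}$ contributes $1$ to the $k$th coordinate of $\gvector{l(A_{\cvector{v}})}$ while $W_{\cvector{v}}$ supplies the non-$k$ coordinates; hence $\gvector{l(A_{\cvector{v}})} = \cvector{v}$. The symmetric computation between the two copies of $A_{\cvector{v}'}$ yields $\gvector{l(A_{\cvector{v}'})} = \cvector{v}'$.

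Concatenating all chunks, in any order, gives the $k$th component, and the remaining components are filled with the second copies of the two-component letters, again in any order. Distinct chunks share no letters, so no cross-interference occurs when computing the linking vector of any $A$-letter. Hence each $\cvector{v} \in B$ is the linking vector of exactly one single-component letter and no other vector in $K_n - \{\cvector{0}\}$ is realized, so $d_k(\cvector{v})$ is $1$ on $B$ and $0$ on $(K_n - \{\cvector{0}\}) \setminus B$, giving $B_k(p) = B$. By construction no component other than the $k$th contains any single-component letter, completing the verification.
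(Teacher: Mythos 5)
Your construction is correct and is essentially the paper's own proof: you split $B$ into $k$-even vectors (realized by sandwiches $A\,W\,A$ with fresh two-component letters) and arbitrarily paired $k$-odd vectors (realized by the interleaved chunk $A_{\cvector{v}}W_{\cvector{v}}A_{\cvector{v}'}A_{\cvector{v}}W_{\cvector{v}'}A_{\cvector{v}'}$, which is exactly the paper's word $Y_iu_1Z_iY_iu_2Z_i$), then concatenate letter-disjoint chunks into the $k$th component. The only cosmetic difference is that the paper treats the empty $B$ as a separate explicit case, which your construction covers implicitly.
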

\begin{proof}
Suppose $B$ is empty.
We let $p$ be the $n$-component Gauss phrase for which every component
 is empty.
As $p$ contains no letters it is clear that $p$ is the required Gauss
 phrase.
\par
We now assume that $B$ is not empty.
Let $E(k)$ be the set of $k$-even vectors in $B$.
We define $r$ to be the number of elements in $E(k)$.
Let $O(k)$ be the set of $k$-odd vectors in $B$.
By the condition in the statement of the proposition, the number of
 elements in $O(k)$ is even. 
Thus we can say that the number of elements is $2s$ for some
 non-negative integer $s$. 
We arbitrarily arrange the elements of $O(k)$ into $s$ pairs.
\par
We now start the construction of $p$.
We begin with an $n$-component Gauss phrase for which every component is
 empty. 
We will define a word $x_i$ for each of the $r$ elements in $E(k)$ and
 define a word $y_i$ for each of the $s$ pairs in $O(k)$.
We will concatenate these words to make the $k$th component of $p$.
As we proceed in our construction we append letters to the other
 components of $p$ when necessary.
\par
For any vector $\cvector{v}$ in $K_n$ we define a \emph{linking subword}
 for the vector in the following way. 
We start with an empty word $u$. 
Looking at the vector $\cvector{v}$, we consider each element, except
 for the $k$th element, in turn. If the $j$th element is $1$, we append
 a previously unused letter to $u$ and append the same letter to the
 $j$th component of $p$. 
If the $j$th element is $0$, we do nothing.
Once we have finished, the word $u$ has the same number of letters as
 the number of non-zero elements in the vector $\cvector{v}$, excluding
 the $k$th element. 
This word $u$ is a linking subword for the vector.
\par
For a vector $\cvector{v}$ in $E(k)$ we define $x_i$ to be the word
 $X_iuX_i$ where $u$ is a linking subword for $\cvector{v}$ and $X_i$ is
 a previously unused letter.
\par
For a pair of vectors $\svector{v}{1}$ and $\svector{v}{2}$ in $O(k)$ we
 define $y_i$ to be the word $Y_iu_1Z_iY_iu_2Z_i$. Here $u_1$ is a linking
 subword for $\svector{v}{1}$ and $u_2$ is a linking subword for
 $\svector{v}{2}$. 
The letters $Y_i$ and $Z_i$ are previously unused letters.
\par
Let $w$ be the word given by concatenating all the words $x_i$ and all
 the words $y_i$.
The $k$th component of $p$ is then defined to be $w$.
\par
We now check that $B_k(p)$ is $B$.
Note that the $k$th component of the Gauss phrase has $r+2s$
 single-component letters, which we constructed in one-to-one
 correspondence with vectors in $B$.
Calculating the linking vectors for each single-component letter, it is
 easy to see that the linking vector is equal to the corresponding
 vector in $B$.
In particular, note that in the subwords $y_i$ the
 single-component letter $Z_i$ appears once between the two occurences
 of $Y_i$ and the single-component letter $Y_i$ appears once between the
 two occurences of $Z_i$. This means that in the linking vectors for
 $Y_i$ and $Z_i$ the $k$th element is $1$ as required. 
\par
By construction of $p$, single-component letters only appear in the $k$th
 component.
Thus $p$ is the required Gauss phrase. 
\end{proof}
\begin{ex}\label{ex:construct_single}
We construct a Gauss phrase $p$ for which $B_1(p)$ is represented by the
 matrix 
\begin{equation*}
\begin{pmatrix}
0 & 1 \\
1 & 0 \\
1 & 1 \\
\end{pmatrix}. 
\end{equation*}
\par
The matrix represents the set of vectors $\{(0,1),(1,0),(1,1)\}$.
Of these vectors, $(0,1)$ is $1$-even and the other two vectors are
 $1$-odd. So in this case $r$ and $s$ are both one and we just need
 to construct two words $x_1$ and $y_1$.
\par
Starting with an empty second component, we first construct $x_1$ from
 the vector $(0,1)$. As the second element in the vector is $1$, we
 define $x_1$ to be $X_1AX_1$. We append $A$ to the second component
 making it now $A$. 
\par
We now construct $y_1$ which has the form $Y_1uZ_1Y_1vZ_1$, where $u$ is
 defined by the contents of the vector $(1,0)$ and $v$ is defined by the
 contents of the vector $(1,1)$.
In each case we only need to consider the second element of the vector. 
For $(1,0)$, the second element is $0$, thus $u$ is the empty word and
 the word for the second component is left unchanged. 
For $(1,1)$, the second element is $1$. So we define $v$ to be $B$ and
 append a $B$ to the word for the second component. Thus $y_1$ is
 $Y_1Z_1Y_1BZ_1$ and the second component is now $AB$.
\par
Putting everything together, we get the Gauss phrase
 $X_1AX_1Y_1Z_1Y_1BZ_1|AB$.
\end{ex}
Suppose we are given an $n$-tuple of sets $B_i$, where each $B_i$ is a
subset of $K_n - \{\cvector{0}\}$. 
The following proposition shows that the necessary condition on each
$B_i$ given in Proposition~\ref{prop:so_necessary} is also sufficient.
\begin{prop}\label{prop:so_sufficient}
Let $n$ be a positive integer.
For each $i$ from $1$ to $n$ let $B_i$ be subset of 
$K_n - \{\cvector{0}\}$ containing an even number of $k$-odd vectors.
Then there exists a Gauss phrase $p$ such that $B_k(p)$ is equal to
 $B_k$ for $k$ running from $1$ to $n$. 
\end{prop}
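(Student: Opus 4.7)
The plan is to reduce this to $n$ independent applications of Proposition~\ref{prop:bk_construction}. For each $k$ from $1$ to $n$, that proposition supplies an $n$-component Gauss phrase $p_k$ with $B_k(p_k) = B_k$ whose components other than the $k$th contain no single-component letters. I would perform these $n$ constructions using pairwise disjoint alphabets (harmless since we may freely rename letters) and then glue the results by concatenating component by component.

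More precisely, write $p_k^{(j)}$ for the $j$th component of $p_k$, and let $p$ be the $n$-component Gauss phrase whose $j$th component is the concatenation $p_1^{(j)} p_2^{(j)} \dotsm p_n^{(j)}$. Because each $p_k$ is a Gauss phrase on its own alphabet and the alphabets are disjoint, every letter of $p$ appears exactly twice, so $p$ is indeed a Gauss phrase.

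The key verification is that $B_k(p) = B_k$ for every $k$. Any single-component letter of the $k$th component of $p$ has both its occurrences inside a single block $p_i^{(k)}$, since different blocks use disjoint alphabets; by the construction of $p_i$, a single-component letter of $p_i$ must lie in $p_i^{(i)}$, forcing $i = k$. Hence the single-component letters of the $k$th component of $p$ are precisely those of $p_k^{(k)}$. For such a letter $X$, the subword between its two occurrences in $p$ is exactly the subword between its two occurrences in $p_k$, and the ``other occurrence'' component of each letter appearing once in that range is unchanged by the gluing (again by disjointness of alphabets). Therefore the linking vector of $X$ computed in $p$ agrees with the linking vector of $X$ computed in $p_k$, and we conclude $B_k(p) = B_k(p_k) = B_k$.

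I do not anticipate a serious obstacle; the parity condition needed to invoke Proposition~\ref{prop:bk_construction} for each individual $B_k$ is already given in the hypothesis. The only point requiring care is the argument in the previous paragraph, that concatenating disjoint blocks does not introduce spurious single-component letters or alter the linking vectors of the existing ones, and this is immediate from the disjointness of alphabets together with the fact that linking vectors depend only on letters strictly between the two occurrences.
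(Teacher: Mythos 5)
Your proposal is correct and follows essentially the same route as the paper: apply Proposition~\ref{prop:bk_construction} to each $B_k$ separately, rename letters so the alphabets are disjoint, and concatenate component by component, noting that all single-component letters of the $k$th component of the glued phrase come from the $k$th block and that their linking vectors are unchanged. The paper's own proof is just a terser version of this same argument.
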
 
\begin{proof}
By Proposition~\ref{prop:bk_construction} we can construct a Gauss
 phrase $p_i$ for each component $i$ such that $B_i(p_i)$ is equal to
 $B_i$ and all the single-component letters in $p_i$ appear in the $i$th
 component.
We use isomorphisms to ensure that no letter appears in more than one of
 the Gauss phrases $p_i$.
We denote the $j$th component of $p_i$ by $w_{ij}$.
\par
Now define $x_i$ to be the concatenation
 $w_{1i}w_{2i}\dotso{}w_{ni}$. 
This is just the concatenation of the $i$th components of all $n$ Gauss
 phrases $p_i$. 
Then define $p$ to be the Gauss phrase where the $i$th component is
 $x_i$.
\par
Note that by construction, all the single-component letters in $k$th
 component of $p$ only come from the word $w_{kk}$. 
This means that $B_k(p)$ can be calculated just by looking at $w_{kk}$.
Therefore $B_k(p)$ is equal to $B_k(p_k)$ which is equal to $B_k$ as
 required.
\end{proof}
\begin{ex}\label{ex:so_realization}
We construct a Gauss phrase $p$ for which $S_o(p)$ is represented by the
 $2$-tuple of matrices 
\begin{equation*}
\begin{pmatrix}
\begin{pmatrix}
0 & 1 \\
1 & 0 \\
1 & 1 \\
\end{pmatrix}, 
\begin{pmatrix}
1 & 0 \\
\end{pmatrix}
\end{pmatrix}.
\end{equation*}
\par
We start by constructing a Gauss phrase for each matrix.
By Example~\ref{ex:construct_single}, the Gauss phrase for the first
 matrix is $X_1AX_1Y_1Z_1Y_1BZ_1|AB$.
\par
The second matrix represents the set containing a single vector
 $(1,0)$.
The corresponding word for that vector is $X_1AX_1$ and we set the first
 component to be $A$. 
This gives us a Gauss phrase which has the form $A|X_1AX_1$.
\par
As the Gauss phrases constructed from the two matrices have letters in
 common, we use isomorphisms to ensure all the letters are distinct. 
The Gauss phrase constructed from the first matrix becomes $XAXYZYBZ|AB$
 and the Gauss phrase from the second becomes $C|WCW$.
The Gauss phrase $p$ is then given by $XAXYZYBZC|ABWCW$.
\end{ex}
The following proposition shows that the linking matrix invariant and
the invariant $S_o$ are independent.
\begin{prop}\label{prop:so_linking}
Let $n$ be a positive integer.
For each $i$ from $1$ to $n$ let $B_i$ be subset of 
$K_n - \{\cvector{0}\}$ containing an even number of $k$-odd vectors.
Let $L$ be an $n \times n$ symmetric matrix with elements in
 $\cyclic{2}$ such that the entries on the main diagonal are all zero.
Then there exists a Gauss phrase $p$ such that $B_k(p)$ is equal to
 $B_k$ for $k$ running from $1$ to $n$ and the linking matrix of $p$ is
 $L$. 
\end{prop}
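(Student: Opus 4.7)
The plan is to combine the construction from Proposition~\ref{prop:so_sufficient} with the construction for the linking matrix given at the end of Section~\ref{sec:simple_invariants}, using the observation that two-component letters placed at the far ends of the components do not interfere with any previously computed linking vectors of single-component letters.

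First I would apply Proposition~\ref{prop:so_sufficient} to produce a Gauss phrase $p_0$ such that $B_k(p_0) = B_k$ for each $k$. Let $L_0$ denote the linking matrix of $p_0$, and set $L' = L - L_0$ computed entrywise in $\cyclic{2}$. Since both $L$ and $L_0$ are symmetric with zero diagonal, so is $L'$. I would then construct $p$ from $p_0$ by adjoining, for each pair $(i,j)$ with $i < j$ and $l'_{ij} = 1$, a fresh letter $A_{ij}$ appended to the \emph{end} of the $i$th component and to the \emph{end} of the $j$th component. By construction every new letter is a two-component letter, each pair $(i,j)$ with $l'_{ij} = 1$ contributes exactly one such letter, and the linking matrix of $p$ equals $L_0 + L' = L$.

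The key verification is that $B_k(p) = B_k(p_0)$ for every $k$. The added letters are two-component letters and therefore do not themselves contribute to any $B_k$. It remains to show that no linking vector of a single-component letter $X$ already present in $p_0$ is altered. Since both occurrences of each new letter $A_{ij}$ are appended \emph{after} all letters of $p_0$ in the components they enter, an occurrence of $A_{ij}$ never lies between the two occurrences of $X$. Hence the linking vector $\gvector{l(X)}$ is unchanged, so $d_k$ is unchanged for every $k$ and every vector, giving $B_k(p) = B_k(p_0) = B_k$.

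There is no real obstacle here; the argument is a straightforward combination of previous constructions, and the only point requiring care is the placement of the new letters at the ends of their components so that the linking vectors of the pre-existing single-component letters are preserved. Once that placement is fixed, both invariance of $S_o$ and the desired equality of linking matrices follow directly.
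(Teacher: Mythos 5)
Your proposal is correct and follows essentially the same route as the paper's proof: build $q$ via Proposition~\ref{prop:so_sufficient}, then for each pair $(i,j)$ where the desired and actual linking matrix entries differ, append a fresh two-component letter to the ends of the $i$th and $j$th components, noting that such letters toggle $l_{ij}$ without affecting any $B_k$. Your explicit remark that end-placement keeps the new letters outside the span of every pre-existing single-component letter is just a spelled-out version of the paper's observation that the appended letters do not change $B_k(q)$.
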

\begin{proof}
By Proposition~\ref{prop:so_sufficient} we can construct a Gauss phrase
 $q$ for which $B_k(q)$ is equal to $B_k$ for all $k$.
Let $M$ be the linking matrix for $q$.
We denote the elements of $L$ by $l_{ij}$ and the elements of $M$ by
 $m_{ij}$ where $i$ and $j$ are integers between $1$ and $n$ inclusive. 
\par
We now consider all possible pairs $i$ and $j$ such that $i$ is less
 than $j$.
If $l_{ij}$ and $m_{ij}$ are not equal, we append a previously unused
 letter to the $i$th and $j$th components of $q$.
Note that doing this does not change $B_k(q)$ for any component $k$. 
If $l_{ij}$ and $m_{ij}$ are equal, we do nothing.
Once we have considered all possible pairs, we call the resulting Gauss
 phrase $p$.
\par
As all the letters we added to $q$ do not change $B_k(q)$ for any $k$, 
$B_k(p)$ is equal to $B_k$ for all $k$.
Note that appending a two-component letter to the $i$th and $j$th
 components of a Gauss phrase swaps the corresponding entry in the
 linking matrix.
As we added one such letter for each pair ($i$,$j$) if and only if
 $l_{ij}$ and $m_{ij}$ were not equal, it is clear that the linking
 matrix of $p$ is $L$.  
\end{proof}
\begin{ex}
In Example~\ref{ex:so_realization} we constructed the Gauss phrase $p$
 given by $XAXYZYBZC|ABWCW$ which has linking matrix
\begin{equation*}
\begin{pmatrix}
0 & 1 \\
1 & 0 \\
\end{pmatrix}. 
\end{equation*}
To construct a Gauss phrase $q$ such that $S_o(q)$ equals $S_o(p)$ but
 has a zero linking matrix, we just need to append an unused letter to
 both components.
The result is $XAXYZYBZCD|ABWCWD$.
\end{ex}
\begin{rem}
Over the set of $n$-component Gauss phrases, the number of values that
 the invariant $S_o$ can take is finite.
This is because the number of vectors in $K_n$ is finite and so the
 number of subsets of $K_n - \{\cvector{0}\}$ is also finite.
\end{rem}
%%%%%%%%%%%%%%%%%%%%%%%%%%%%%%%%%%%%%%%%%%%%%%%%
\section{The invariant $S$}\label{sec:s_invariant}
In this section we construct an invariant $S$ for homotopy of Gauss
phrases based on the ideas in Section~\ref{sec:so_invariant}.
\par
We start by giving an example to show that the $S_o$ invariant is not
invariant under the shift move.
\begin{ex}\label{ex:so_strong}
Let $p$ be the Gauss phrase $ABAC|B|C$ and $q$ be the Gauss phrase
 $BACA|B|C$.
We can transform $p$ into $q$ by a shift move on the first component, so
 $p$ and $q$ are homotopic.
\par
On the other hand, $B_1(p)$ is $\{0,1,0\}$ and $B_1(q)$ is $\{0,0,1\}$,
 which means that $S_o(p)$ is not equal to $S_o(q)$.
\end{ex} 
In general, given a Gauss phrase $p$ with $k$th component $AxAy$, the
 result of a shift move on the $k$th component is a Gauss phrase $q$
 with $k$th component $xAyA$.
 Then $\gvector{l(A)}$ in $p$ is $\gvector{l(x)}$, the linking vector of
 the word $x$, and $\gvector{l(A)}$ in $q$ is $\gvector{l(y)}$, the
 linking vector of the word $y$.
Let $\cvector{l}$ be the linking vector of the $k$th component of
 $p$.
As the linking vector of a component is invariant under the shift move,
 $\cvector{l}$ is also the linking vector of the $k$th component of
 $q$.
Now, observe that
\begin{equation*}
 \cvector{l} = \gvector{l(x)} + \gvector{l(y)}
\end{equation*}
and therefore
\begin{equation}\label{eqn:linking-complement}
  \gvector{l(y)} = \cvector{l} + \gvector{l(x)}.
\end{equation}
\par
For any vector $\cvector{u}$ in $K_n$ we can define a map
$c_{\cvector{u}}$ from $K_n$ to itself by
\begin{equation*}
c_{\cvector{u}}(\cvector{v}) = \cvector{u} + \cvector{v}
\end{equation*}
for all vectors $\cvector{v}$ in $K_n$.
We define $K(\cvector{u})$ to be the set of orbits of $K_n$
under $c_{\cvector{u}}$.
Then Equation~\ref{eqn:linking-complement} implies that
$\gvector{l(x)}$ and $\gvector{l(y)}$ are in the same orbit of
$K_n(\cvector{l})$.
\par
Note that for the zero vector $\cvector{0}$ in $K_n$, $c_{\cvector{0}}$
is the identity map and each orbit in $K_n(\cvector{0})$ has one
element.
For any other vector $\cvector{u}$ in $K_n$, the map $c_{\cvector{u}}$
is an involution. 
That is, $c_{\cvector{u}} \circ c_{\cvector{u}}$ is
the identity map.
In this case, it is easy to check that each orbit in $K(\cvector{u})$
contains two elements.
We write $\vorbit{v}$ for the orbit of $\cvector{v}$ and
$\cvector{u}\in \vorbit{v}$ means that $\cvector{u}$ is in the orbit
of $\cvector{v}$.
\par
Using this idea of orbits, we adapt the definition of $S_o$ to define a
homotopy invariant of Gauss phrases.
\par
Let $p$ be a Gauss phrase and let $\svector{l}{k}$ be the linking vector
of the $k$th component of $p$.
Write $\mathcal{A}_k$ for the set of single-component letters appearing
in the $k$th component of $p$.
For an orbit $\vorbit{v}$ in $K(\svector{l}{k})$ we define a map $d_k$
from $K(\svector{l}{k})$ to $\Z$ by 
\begin{equation*}
 d_k(\vorbit{v}) = \sharp \lbrace X\in \mathcal{A}_k | \gvector{l(X)}
  \in \vorbit{v} \rbrace
\end{equation*}
where $\sharp$ means the number of elements in the set.
Let $O_k(p)$ be the subset of $K(\svector{l}{k}) - \{\vorbit{0}\}$
given by 
\begin{equation*}
 O_k(p) = \lbrace \vorbit{v} \in K(\svector{l}{k}) - \{\vorbit{0}\} |
  d_k(\vorbit{v}) \text{ is odd}\rbrace. 
\end{equation*}
\par
Note that we can calculate $O_k(p)$ from $B_k(p)$ as follows.
If $\svector{l}{k}$ is the zero vector, each orbit of
$K(\svector{l}{k})$ contains a single vector and $O_k(p)$ is the set of
orbits of vectors in $B_k(p)$.
If $\svector{l}{k}$ is not the zero vector, each orbit of
$K(\svector{l}{k})$ contains exactly two vectors.
In this case, if $\vorbit{u}$ is an orbit in
$K(\svector{l}{k}) - \{\vorbit{0}\}$ 
consisting of the vectors $\cvector{u}$ and $\cvector{v}$, $\vorbit{u}$
is in $O_k(p)$ if and only if exactly one of $\cvector{u}$ or
$\cvector{v}$ is in $B_k(p)$. 
\par
We define $S(p)$ to be the $n$-tuple where for each $k$ the $k$th element
is the pair $(\svector{l}{k}, O_k(p))$. We write $S_k(p)$ for
the $k$th element in $S(p)$.
\begin{thm}\label{thm:s_invariant}
Let $p$ be a Gauss phrase. Then $S(p)$ is a homotopy invariant of $p$.
\end{thm}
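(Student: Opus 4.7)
The plan is to verify invariance of $S_k(p) = (\svector{l}{k}, O_k(p))$ under each type of move separately. The first coordinate $\svector{l}{k}$ is the $k$th row of the linking matrix and is therefore already known to be a homotopy invariant from Section~\ref{sec:simple_invariants}. For the open homotopy moves (isomorphism and H1, H2, H3), Theorem~\ref{thm:so_invariant} tells us that $B_k(p)$ is invariant; since $O_k(p)$ is determined by $B_k(p)$ together with $\svector{l}{k}$ in the manner described just before the theorem, $O_k(p)$ is invariant under these moves as well. Thus only the shift move requires fresh work.

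A shift applied to some component $j \neq k$ is immediate: it neither alters the $k$th component nor moves any letter into or out of it, so every ingredient of $S_k(p)$ is preserved.

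Consider now a shift applied to the $k$th component, converting $A\zeta$ to $\zeta A$, where $A$ is the first letter. The vector $\svector{l}{k}$ is unchanged, since the multiset of letters in each component is preserved, so the orbit space $K(\svector{l}{k})$ itself is the same before and after. If $A$ is a two-component letter, it contributes nothing to any $d_k$-count either before or after, so we only need to handle the other single-component letters (see below). If instead $A$ is a single-component letter, write $\zeta = xAy$; then by Equation~\ref{eqn:linking-complement}, $\gvector{l(A)}$ changes from $\gvector{l(x)}$ to $\gvector{l(y)}$, and these two vectors lie in the same $c_{\svector{l}{k}}$-orbit, so the contribution of $A$ to $d_k$ remains in the same orbit class.

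The main step, and the only genuine case analysis, is to check that every \emph{other} single-component letter $B$ in the $k$th component has its linking vector $\gvector{l(B)}$ preserved by the shift. In the two-component-$A$ case both occurrences of $B$ lie entirely in $\zeta$ and the subword strictly between them is visibly unchanged. In the single-component-$A$ case one splits on where the two occurrences of $B$ lie relative to $A$, $x$, $y$: if both are in $x$, writing $x = x_1 B x_2 B x_3$, the subword between them is $x_2$ both before and after; if both are in $y$, the analogous statement holds; if one lies in $x$ and one in $y$, writing $x = x_1 B x_2$ and $y = y_1 B y_2$, the subword between them is $x_2 A y_1$ both before and after the shift. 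Hence $\gvector{l(B)}$ is preserved in every case, so $d_k(\vorbit{v})$ is unchanged on every orbit, and $O_k(p)$ is preserved. This completes the plan.
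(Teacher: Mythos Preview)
Your proof is correct and follows essentially the same route as the paper: reduce to the shift move via the linking matrix and Theorem~\ref{thm:so_invariant}, then check that the shifted letter's linking vector stays in its orbit by Equation~\ref{eqn:linking-complement} while every other single-component letter's linking vector is literally unchanged. The only difference is cosmetic: where you split into the two-component and single-component cases for the shifted letter and then do a three-way subcase analysis on the position of $B$, the paper simply writes the $k$th component as $BxAyAz \to xAyAzB$ (with $B$ the shifted letter and $A$ any other single-component letter) and observes that the intervening subword $y$ is the same before and after.
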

\begin{proof}
It is enough to prove that $S_k(p)$ is a homotopy invariant of $p$.
As we already know that the linking matrix is a homotopy invariant of a
 Gauss phrase, we know that the linking vector $\svector{l}{k}$ is
 invariant. So we just need to check what happens to the set $O_k(p)$
 under each move.
However, as we can derive $O_k(p)$ from $B_k(p)$, invariance of $O_k(p)$
 under the moves H1, H2 and H3 and isomorphism follows from
 Theorem~\ref{thm:so_invariant}.
Thus we just need to check what happens under a shift move applied to
 the $k$th component.
\par
For a shift move we must consider two cases.
First we consider what happens to a linking vector associated to a
 letter $A$ when a letter $B$ is moved by the shift move.
Given a Gauss phrase $p$ with $k$th
 component $BxAyAz$, the result of the shift move is a Gauss phrase $q$
 with $k$th component $xAyAzB$. However, in both cases the linking
 vector for $A$ is the linking vector for $y$. Thus the linking vector
 for $A$ is unchanged in this case and there is no effect on $O_k(p)$.
\par
Secondly we consider the case where $A$ itself is moved by the shift
 move. 
We have already observed above (Equation~\ref{eqn:linking-complement})
 that the linking vectors of $A$ before and after the shift move are in
 the same orbit of $K(\svector{l}{k})$.
Thus $O_k(p)$ is not affected by this change.
\par
Since $O_k(p)$ is unchanged in both cases, we can conclude that $O_k(p)$ is
 invariant under the shift move.
\end{proof}
We note that the linking matrix of a Gauss phrase $p$ can be
reconstructed from $S(p)$ by combining the linking vectors given as the
first elements of the pairs $S_k(p)$.
The following two examples shows that the invariant $S$ is stronger than
the linking matrix.
\begin{ex}\label{ex:calc_s_simple}
Let $p$ be the Gauss phrase $AB|AC|BC|\trivial$.
Then $\svector{l}{1}$ is $(0,1,1,0)$, $\svector{l}{2}$ is $(1,0,1,0)$,
$\svector{l}{3}$ is $(1,1,0,0)$ and $\svector{l}{4}$ is $(0,0,0,0)$. 
\par
As there are no single-component letters appearing in the Gauss phrase,
the sets $O_1(p)$, $O_2(p)$, $O_3(p)$ and $O_4(p)$ are all empty.
\par
The $S$ invariant of the Gauss phrase is given by 
$( (\svector{l}{1},\emptyset), (\svector{l}{2},\emptyset), (\svector{l}{3},\emptyset), (\svector{l}{4},\emptyset))$.
\end{ex}
\begin{ex}\label{ex:calc_s}
Let $q$ be the Gauss phrase
\begin{equation}\label{eqn:example_phrase}
ADBAEBCFCG|JLDHIHJK|EI|FGKL.
\end{equation}
Then $\svector{l}{1}$ is $(0,1,1,0)$, $\svector{l}{2}$ is $(1,0,1,0)$,
$\svector{l}{3}$ is $(1,1,0,0)$ and $\svector{l}{4}$ is $(0,0,0,0)$. 
These vectors are the same as those calculated in
 Example~\ref{ex:calc_s_simple}.
Thus the linking matrix of $q$ is equal to that of $p$ in
 Example~\ref{ex:calc_s_simple} and so the linking matrix invariant can
 not distinguish them.
\par
In order to calculate $O_1(q)$, we consider the single-component letters
 in the first component. 
The single-component letters are $A$, $B$ and $C$.
We see that $\gvector{l(A)}$ is $(1,1,0,0)$,
 $\gvector{l(B)}$ is $(1,0,1,0)$ and $\gvector{l(C)}$ is
 $(0,0,0,1)$. Since $\svector{l}{1}$ is non-zero, for each of the linking
 vectors of the letters, we need to determine the corresponding
 orbit in $K(\svector{l}{1})$. 
We find that $\gvector{l(A)}$ and $\gvector{l(B)}$ are in the same orbit
 and that $\gvector{l(C)}$ is in the same orbit as vector $(0,1,1,1)$. 
The only orbit occuring an odd number of times is $[(0,0,0,1)]$ and
 this orbit does not contain the zero vector. 
Therefore $O_1(q)$ is $\{[(0,0,0,1)]\}$.
\par
The second component only has two single-component letters $H$ and
 $J$. 
The linking vector for $H$, $\gvector{l(H)}$ is $(0,0,1,0)$. In
 $K(\svector{l}{2})$, this vector belongs to the orbit which
 also contains the vector $(1,0,0,0)$. 
The linking vector for $J$, $\gvector{l(J)}$ is $(1,0,1,1)$ which
 belongs to the orbit in $K(\svector{l}{2})$ which also
 contains $(0,0,0,1)$. Thus $\gvector{l(H)}$ and $\gvector{l(J)}$ belong
 to different orbits and neither belong to the orbit containing the zero
 vector. 
Therefore $O_2(q)$ is $\{[(0,0,1,0)],[(1,0,1,1)]\}$.
\par
The third and fourth components do not contain any single-component
 letters and so both $O_3(q)$ and $O_4(q)$ are empty.
\par
Since the sets $O_1(q)$ and $O_2(q)$ are not empty, we can see that they
 are not equivalent to the sets $O_1(p)$ and $O_2(p)$ 
 calculated in Example~\ref{ex:calc_s_simple}. 
This means that the
 invariant $S$ can distinguish $q$ from the Gauss phrase given in
 Example~\ref{ex:calc_s_simple}.
\end{ex}
As we did for the $S_o$ invariant, we provide a canonical way of writing
each $S_k(p)$ as a matrix.
We can then write $S(p)$ as a tuple of matrices. 
For two Gauss phrases $p$ and $q$, the equivalence of $S(p)$ and $S(q)$
is given by equality of the corresponding tuples of matrices.
\par
Suppose $p$ is an $n$-component Gauss phrase and $O_k(p)$ has $r$
elements.
We define a matrix $M_k$, with $n$ columns and $r+1$ rows, for $S_k(p)$
as follows.
We define the first row of the matrix to be $\svector{l}{k}$.
Then for each orbit $\vorbit{v}$ in $O_k(p)$, we define the
representative vector of $\vorbit{v}$ to be the smallest vector in
$\vorbit{v}$ (according to the order we defined on $K_n$).
The remainder of the rows in the matrix are then given by the
representative vectors of the members of $O_k(p)$ written out in
ascending order.
\par
Given a matrix constructed in this way we can easily reconstruct
$S_k(p)$. Indeed, the first row of the matrix gives $\svector{l}{k}$ and
the remaining rows give $O_k(p)$.
On the other hand, there is no room for choice in the construction of
the matrix and so, under this construction method, there is only one
matrix which represents $S_k(p)$.
\begin{ex}
Consider the Gauss phrase given in \eqref{eqn:example_phrase}.
We label it $q$. In Example~\ref{ex:calc_s} we
 calculated $S(q)$. Using the information we calculated there and the
 method explained above it is easy to represent $S(q)$ as a $4$-tuple
 of matrices: 
\begin{equation*}
\begin{pmatrix}
\begin{pmatrix}
0 & 1 & 1 & 0 \\
0 & 0 & 0 & 1 \\
\end{pmatrix},
\begin{pmatrix}
1 & 0 & 1 & 0 \\
0 & 0 & 0 & 1 \\
0 & 0 & 1 & 0 \\
\end{pmatrix},
\begin{pmatrix}
1 & 1 & 0 & 0 \\
\end{pmatrix},
\begin{pmatrix}
0 & 0 & 0 & 0 \\
\end{pmatrix}
\end{pmatrix}. 
\end{equation*}
\end{ex} 
\begin{rem}
As $S$ is invariant under isomorphism and the moves H1, H2 and H3 we can
 also consider it as an open homotopy invariant.
However, with reference to Example~\ref{ex:so_strong},
it is clear that combining the $S_o$ invariant with the
 linking matrix invariant gives a stronger open homotopy invariant than
 $S$.
\end{rem} 
%%%%%%%%%%%%%%%%%%%%%%%%%%%%%%%%%%%%%%%%%%%%%%%%
\section{Properties and realizability of $S$}\label{sec:s_conditions}
Suppose we are given an $n$-tuple of pairs $(\svector{l}{k},O_k)$, such
that $\svector{l}{k}$ is a vector in $K_n$ and $O_k$ is a subset of
$K(\svector{l}{k}) - \{[\cvector{0}]\}$.
In this section we give necessary and sufficient conditions for the
$n$-tuple $(\svector{l}{k},O_k)$ to be $S(p)$ for some Gauss phrase
$p$.
\par
We have already noted that we can reconstruct the linking matrix of a
Gauss phrase $p$ from $S(p)$.
Thus an obvious necessary condition is that the vectors $\svector{l}{k}$
should form a valid linking matrix.
\par
Suppose that $\cvector{u}$ is a vector in $K_n$ such that the $k$th
element of $\cvector{u}$ is $0$.
Then for any vector $\cvector{v}$ in $K_n$, the $k$th element of
$\cvector{v}$ and $c_{\cvector{u}}(\cvector{v})$ are the same. 
Thus we can say that an orbit in $K(\cvector{u})$ is $k$-odd if any
vector in the orbit is $k$-odd and this concept is well-defined. 
We define the term $k$-even for orbits in $K(\cvector{u})$ in an
analogous way.
\par
Let $p$ be a Gauss phrase and $\svector{l}{k}$ be the linking vector of
the $k$th component of $p$.
Note that because the entries on the main diagonal of the linking matrix
are all zero, the $k$th element of $\svector{l}{k}$ is $0$.
As $O_k(p)$ is a subset of $K(\svector{l}{k}) - \{[\cvector{0}]\}$ the
concept of $k$-odd orbit is well defined for members of $O_k(p)$.
The following proposition gives a necessary condition on $O_k$ to be
$O_k(p)$ for some Gauss phrase $p$. 
\begin{prop}\label{prop:s_necessary}
Let $p$ be a Gauss phrase.
Then $O_k(p)$ contains an even number of $k$-odd orbits.
\end{prop}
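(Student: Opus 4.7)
The plan is to reduce the statement to Proposition~\ref{prop:so_necessary} via the explicit description of $O_k(p)$ in terms of $B_k(p)$ already recorded in the paper. The key preliminary observation is that the $k$th entry of $\svector{l}{k}$ is $0$ (because the main diagonal of the linking matrix is zero), so for every $\cvector{v}\in K_n$ the vectors $\cvector{v}$ and $c_{\svector{l}{k}}(\cvector{v})$ agree in the $k$th coordinate. Hence being $k$-odd is a well-defined property of orbits in $K(\svector{l}{k})$: an orbit is $k$-odd iff some (equivalently, every) representative is $k$-odd.

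I would then split into two cases. If $\svector{l}{k}=\cvector{0}$, every orbit in $K(\svector{l}{k})$ is a singleton, and $O_k(p)$ is in bijection with $B_k(p)$ preserving the $k$th coordinate. The conclusion is then immediate from Proposition~\ref{prop:so_necessary}. If $\svector{l}{k}\neq\cvector{0}$, each orbit in $K(\svector{l}{k})-\{\vorbit{0}\}$ consists of exactly two vectors $\{\cvector{u},c_{\svector{l}{k}}(\cvector{u})\}$, and $\vorbit{u}\in O_k(p)$ iff exactly one of $\cvector{u}$ or $c_{\svector{l}{k}}(\cvector{u})$ lies in $B_k(p)$.

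The main step is the counting argument in this second case. Let $N$ be the number of $k$-odd vectors in $B_k(p)$; by Proposition~\ref{prop:so_necessary}, $N$ is even. Partition the $k$-odd orbits according to how many of their two representatives lie in $B_k(p)$: let $a$ be the number of $k$-odd orbits with both representatives in $B_k(p)$, $b$ the number with exactly one, and $c$ the number with neither. Since both representatives of a $k$-odd orbit are $k$-odd, we have $N=2a+b$. But $b$ is exactly the number of $k$-odd orbits in $O_k(p)$, and from $N$ even we deduce that $b$ is even, finishing the proof.

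I do not expect any serious obstacle here: the preparatory fact that $k$-oddness descends to orbits is essentially immediate from the shape of the linking matrix, and the rest is a parity count piggybacking on the already-proved Proposition~\ref{prop:so_necessary}. The only thing to be careful about is ensuring that we do not accidentally count the orbit $\vorbit{0}$, which is excluded from $O_k(p)$ by definition but is $k$-even anyway, so it plays no role in the count of $k$-odd orbits.
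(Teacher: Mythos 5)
Your proof is correct and follows the same route as the paper, which simply cites Proposition~\ref{prop:so_necessary} together with the fact that $O_k(p)$ is derivable from $B_k(p)$; you have merely filled in the parity count ($N=2a+b$) that the paper leaves implicit. No discrepancies.
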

\begin{proof}
This follows from Proposition~\ref{prop:so_necessary} and the fact that
 $O_k(p)$ is derivable from $B_k(p)$.
\end{proof} 
The following proposition shows that the necessary conditions we have
given are also sufficient.
\begin{prop}\label{prop:s_sufficient}
Let $n$ be a positive integer and let $L$ be an $n \times n$ symmetric
 matrix with elements in $\cyclic{2}$ such that the entries on the main
 diagonal are all zero. 
Write $\svector{l}{k}$ for the $k$th row of $L$.
For each $k$ from $1$ to $n$ inclusive, let $O_k$ be a subset of
 $K(\svector{l}{k}) - \{[\cvector{0}]\}$ containing an even number of
 $k$-odd orbits. 
Then there exists a Gauss phrase $p$ such that $S_k(p)$ is equal to
 $(\svector{l}{k},O_k)$ for all $k$. 
\end{prop}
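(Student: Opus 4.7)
The plan is to reduce the statement to Proposition~\ref{prop:so_linking} by choosing a suitable set of representatives for each orbit in $O_k$. Specifically, for each $k$, I will construct a set $B_k \subseteq K_n - \{\cvector{0}\}$ from $O_k$ in such a way that a Gauss phrase with $B_k(p) = B_k$ and linking matrix $L$ automatically satisfies $O_k(p) = O_k$, using the explicit conversion between $B_k(p)$ and $O_k(p)$ recorded after the definition of $O_k(p)$ in Section~\ref{sec:s_invariant}.

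First I would define $B_k$ as follows. If $\svector{l}{k} = \cvector{0}$, each orbit of $K(\svector{l}{k})$ is a singleton, so set $B_k$ to be the set of those (single) vectors of the orbits in $O_k$. If $\svector{l}{k} \neq \cvector{0}$, each orbit of $K(\svector{l}{k})$ has exactly two elements; for each orbit in $O_k$, place exactly one of its two vectors (say, the smaller one under the order defined in Section~\ref{sec:so_invariant}) into $B_k$. Since $O_k \subseteq K(\svector{l}{k}) - \{\vorbit{0}\}$, the orbit $\{\cvector{0},\svector{l}{k}\}$ is avoided, so $\cvector{0} \notin B_k$, and $B_k$ is a legitimate subset of $K_n - \{\cvector{0}\}$.

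Next I would verify the parity hypothesis required by Proposition~\ref{prop:so_linking}. Because the $k$th entry of $\svector{l}{k}$ is $0$ (diagonal entries of $L$ vanish), the map $c_{\svector{l}{k}}$ preserves the $k$th coordinate, so every vector in a given orbit has the same $k$th coordinate. Consequently, the chosen representative in $B_k$ is $k$-odd if and only if its orbit is $k$-odd, and the number of $k$-odd vectors in $B_k$ equals the number of $k$-odd orbits in $O_k$, which is even by hypothesis. Now apply Proposition~\ref{prop:so_linking} to obtain a Gauss phrase $p$ with $B_k(p) = B_k$ for all $k$ and linking matrix $L$. In particular, the $k$th row of the linking matrix of $p$ is $\svector{l}{k}$, so the first coordinate of $S_k(p)$ is correct.

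Finally I would check that $O_k(p) = O_k$ by using the recipe from Section~\ref{sec:s_invariant}. If $\svector{l}{k} = \cvector{0}$, then $O_k(p)$ is the set of orbits (singletons) of vectors in $B_k(p) = B_k$, which is exactly $O_k$ by construction. If $\svector{l}{k} \neq \cvector{0}$, then for any orbit $\vorbit{u}$ in $K(\svector{l}{k}) - \{\vorbit{0}\}$, $\vorbit{u} \in O_k(p)$ iff exactly one of the two vectors of $\vorbit{u}$ lies in $B_k$; by construction this happens iff $\vorbit{u} \in O_k$. Hence $S_k(p) = (\svector{l}{k}, O_k)$ for every $k$, completing the proof. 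The only subtle point, and the step I would be most careful with, is ensuring that the orbit $\vorbit{0}$ is handled correctly so that no spurious element of $O_k(p)$ appears; but this is automatic because $\cvector{0}$ and $\svector{l}{k}$ are both excluded from $B_k$ by our choice of representatives.
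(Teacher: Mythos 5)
Your proof is correct and follows essentially the same route as the paper: choose one representative vector per orbit of $O_k$ to form $B_k$, apply Proposition~\ref{prop:so_linking} to get a Gauss phrase with $B_k(p)=B_k$ and linking matrix $L$, and then read off $O_k(p)=O_k$ from the conversion between $B_k(p)$ and $O_k(p)$. The only difference is that you spell out the parity verification (that $B_k$ inherits an even number of $k$-odd vectors because $c_{\svector{l}{k}}$ preserves the $k$th coordinate) and the handling of the orbit $[\cvector{0}]$, details the paper leaves implicit.
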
 
\begin{proof}
For each orbit in $O_k$ we pick one of the vectors in the orbit as a
 representative vector.
How we chose the representative vector does not matter but we could do
 this by picking the smallest vector as we do when we write $S_k(p)$ as
 a matrix.
Then define $B_k$ to be the set of representative vectors of the orbits
 in $O_k$.
\par
By Proposition~\ref{prop:so_linking} we can construct a Gauss phrase $p$
 for which $B_k(p)$ is equal to $B_k$ for each $k$ and for which the
 linking matrix is equal to $L$.
It is then easy to see that $O_k(p)$ is equal to $O_k$ for all $k$.
Thus $p$ is the required Gauss phrase.
\end{proof}
\begin{rem}
As for the invariant $S_o$, we note that the number of values that $S$
 takes over the set of $n$-component Gauss phrases is finite.
\end{rem}
%%%%%%%%%%%%%%%%%%%%%%%%%%%%%%%%%%%%%%%%%%%%%%%%
\section{Other homotopies}\label{sec:other_homotopy}
We finish this paper with some remarks about some other kinds of
homotopy.
\par
Recall that when we constructed a Gauss phrase from a virtual link
diagram we assumed that an order had been assigned to the components in
the link. 
Permuting the order of the components of the link may produce a
different Gauss phrase.
With this in mind, we define a new move on Gauss phrases called
\emph{component permutation} which allows us to permute the components
of a Gauss phrase. 
By allowing this kind of move we can introduce a new kind of homotopy
which we call \emph{unordered homotopy}.  
Under this kind of homotopy we do not attach any significance to the
order of the components in a Gauss phrase. 
Invariants introduced in this paper become invariants
of unordered homotopy by considering equivalence of invariants modulo
some kind of operation involving permutation.
\par
We now consider the general case of nanophrases.
Recall that a nanophrase is a pair consisting of a Gauss phrase and a
map.
The map, called a projection, maps the letters appearing in the Gauss
phrase to some set $\alpha$ which is fixed.
We can define a map from the set of nanophrases to the set of Gauss
phrases by forgetting the projection.
It is easy to check that this induces a well-defined map from
equivalence classes of nanophrases under a particular homotopy, to
equivalence classes of Gauss phrases under homotopy.
Thus homotopy invariants of Gauss phrases are invariants for any
homotopy of nanophrases.
\par
In particular, the $S$ invariant is invariant under any nanophrase
homotopy.
However, using extra information contained in the projection, it should
be possible to make stronger nanophrase homotopy invariants based on
the construction of $S$.
Equally, it should be possible to construct stronger open homotopy
invariants of nanophrases based on $S_o$.
We leave consideration of such possibilities to another paper.
%%%%%%%%%%%%%%%%%%%%%%%%%%%%%%%%%%%%%%%%%%%%%%%%
\bibliography{mrabbrev,gaussphrase}
\bibliographystyle{hamsplain}
\end{document}